\title{\bf Upper bound on  the number of edges of an almost planar bipartite graph}
\author{
         D.\,V.\,Karpov\thanks{This research was supported by Russian Foundation for Basic Research (RFBR) grant 11-01-00760-a
and Grant NSh-3229.2012.1.}
     \\[6pt]
          {\small E-mail: \texttt{dvk0@yandex.ru}       }
    }
\date{}
\begin{document}
\maketitle
\righthyphenmin=2
\renewcommand*{\proofname}{\bf Proof}
\newtheorem{thm}{Theorem}
\newtheorem{lem}{Lemma}
\newtheorem{cor}{Corollary}
\theoremstyle{definition}
\newtheorem{defin}{Definition}
\theoremstyle{remark}
\newtheorem{rem}{\bf Remark}

\def\N{{\rm N}}
\def\cr{{\rm cr}}
\def\q#1.{{\bf #1.}}
\def\I{{\rm Int}}
\def\R{{\rm Bound}}
\def\mmin{\mathop{\rm min}}

\centerline{\sc Abstract}
Let $G$ be a bipartite graph without loops and multiple edges on~$v\ge 4$ vertices, which can be drawn on the plane such that any edge intersects at most one other edge.  We prove that such a graph has at most~$3v-8$ edges for even~$v\ne 6$ and at most~$3v-9$ edges for odd~$v$  and~$v=6$. For all~$v\ge 4$ examples showing that these bounds are tight are constructed.

In the end of the paper we discuss a question about  drawing of complete bipartite graphs on the plane such that any edge intersects at most one other edge.

\section{\bf Introduction}

We consider graphs without loops and multiple edges and use standard notations.
We denote the vertex set of a graph~$G$ by~$V(G)$ and its edge set by~$E(G)$. We use the notations~$v(G)$ and~$e(G)$ for the number of vertices and edges of a graph~$G$, respectively.

We denote by~$d_G(x)$ the degree of a vertex~$x$ in the graph~$G$. The minimal and maximal  vertex degree of a graph~$G$ are  denoted by~$\delta(G)$ and~$\Delta(G)$, respectively.

In many papers (for example,~\cite{PT}) bounds on the crossing number of plane drawings of graphs are discussed. We will not review these results here. Our paper is devoted to a similar but less popular question about drawing graphs on the plane such that any edge  intersects bounded number of other edges.

\begin{defin}
$1)$ Let $k$ be a nonnegative integer. We say that a graph is $k$-{\it planar}, if it can be drawn on the plane such that any edge intersects at most~$k$ other edges.

For~$k=1$  we call such graph  {\it almost planar}.

$2)$ Let~$\cr(G)$ denote the minimal number of pairs of crossing edges in a plane drawing of~$G$.
\end{defin}

Clearly, a $0$-planar graph is a planar graph. As usual, while speaking about plane  drawing of a graph, we assume that vertices of this graph are points and its edges are  ``good'' nonintersecting curves. Any such curve do not contain vertices different from the ends of correspondent edge. No three curves have a common inner point, i.e. each point of intersection belongs to exactly two edges.

A well known classic fact tells us that a planar graph on~$v$ vertices  contains at most~$3v-6$ edges. This bound is tight for any~$v\ge 3$.
It is proved in~\cite{PT}, that $e(G)\le (k+3)(v(G)-2)$ for a $k$-planar graph, where ${1\le k\le 4}$. It is shown  in~\cite{PT} that for $k=1$ the bound is attained for all~${v\ge 12}$. For~$k=2$ the bound is attained for sufficiently large~$v \equiv 2 \pmod{3}$. It is also shown in~\cite{PT}, that for any $k$-planar graph~$G$ (where~$k>0$) the inequality~$e(G)\le 4.108\sqrt{k}\cdot v(G)$ holds.

There are other works discussing similar questions, but their authors consider more complicated classes of graphs. For example, {\it $k$-quasi-planar graphs} (graphs, which can be drawn on the plane such that the intersection graph of this drawing do not contain a clique of size~$k$, see~\cite{AA}-\cite{A}). Graphs of this class   have much in common with  $k$-planar graphs, but they have more complicated structure.  Let us also mention several papers, devoted to questions of drawing graphs with constraint on some plane configurations:~\cite{TT},~\cite{PGT} and others.

Let us return to rather simple class of graphs~--- almost planar graphs. 
For these graphs we have the inequality~$e(G)\le 4v(G)-8$.  A well known classic fact tells us  that a bipartite planar graph on~$v$ vertices can have not more than~$2v-4$ edges and this bound is tight for any~$v\ge 3$.
In our paper we study bipartite almost planar graphs. We shall find a precise upper bound for the number of edges of a bipartite almost planar graph for any number of vertices~$v\ge 4$.

\begin{thm}
\label{dap} 
Let~$\beta(v)$ denote the maximal number of edges in a bipartite almost planar graph on~$v$ vertices for any~$v\ge 4$.  Then $\beta(v)=3v-8$ for even~$v\ne 6$ and  $\beta(v)=3v-9$ for odd~$v$ and for~$v=6$.
\end{thm}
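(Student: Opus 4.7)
The plan is to prove the upper bound $e \le \beta(v)$ and then construct extremal examples. For the upper bound, I fix an almost planar drawing of $G$, let $c$ denote the number of crossing pairs, and delete one edge from each pair to obtain a bipartite planar subgraph $G_0$ on $v$ vertices with $e - c$ edges. Applying the classical Euler-formula bound for bipartite planar graphs (in which every face has length at least $4$) gives $e - c \le 2v - 4$, hence $e \le 2v + c - 4$.

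The crucial step, and the main obstacle, is to show $c \le v - 4$. For this I pass to the planarization $G^{\times}$, the plane graph obtained by declaring each crossing point to be a new vertex of degree $4$. The bipartite structure of $G$ forces the cyclic arrangement of the four half-edges at each crossing vertex to follow the pattern $A, A, B, B$, so exactly two of the four local faces have ``mixed'' corners (at which a triangular face is possible) while the other two have ``monochromatic'' corners and hence have length at least $4$. After normalizing (without loss of generality) so that for every crossing pair both ``side'' edges of the quadrilateral of its four endpoints are present in $G$, I combine Euler's formula applied to $G^{\times}$ with a careful face-length accounting that penalizes monochromatic corners, yielding $c \le v - 4$ and thus $e \le 3v - 8$.

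For $v = 6$ the improved bound $e \le 3v - 9 = 9$ is immediate because every bipartite graph on~$6$ vertices is a subgraph of $K_{3,3}$ and hence has at most~$9$ edges. For odd~$v$, achieving $e = 3v - 8$ would force $c = v - 4$ together with $G_0$ being a bipartite planar quadrangulation, which in turn imposes a pairing of the $v-2$ faces of $G_0$ along the kept diagonals of the crossings; a parity count of these pairings shows they cannot exist when $v$ is odd, giving $e \le 3v - 9$.

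For the constructions I take $K_{2,2}$ for $v = 4$, $K_{2,3}$ for $v = 5$, $K_{3,3}$ for $v = 6$, and a classical almost planar drawing of $K_{4,4}$ for $v = 8$. For larger $v$ I build extremal examples by starting from a suitable bipartite planar quadrangulation on $v$ vertices and inserting $v - 4$ crossing pairs (respectively $v - 5$ for odd $v$) on carefully chosen edges, ensuring that no two inserted crossings interfere so that the result remains almost planar. The chief technical difficulty throughout is the tight inequality $c \le v - 4$, which requires the delicate planarization analysis sketched above.
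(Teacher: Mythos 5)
Your reduction $e\le 2v-4+c$ (delete one edge per crossing pair, apply the bipartite planar bound) is sound and is essentially the same first step as the paper's inequality~(\ref{e1}). The genuine gap is the ``crucial step'' $c\le v-4$: you assert it follows from Euler's formula applied to the planarization $G^{\times}$ together with the observation that only the two mixed corners at each crossing can lie in triangular faces, but this accounting provably cannot yield $c\le v-4$. Carrying it out: $G^{\times}$ has $v+c$ vertices, $e+2c$ edges, hence $f=2-v+e+c$ faces; every triangular face contains exactly one crossing vertex at a mixed corner (an edge of $G^{\times}$ cannot join two crossing vertices since each edge of $G$ is crossed at most once), so $f_3\le 2c$; then $2(e+2c)\ge 4f-f_3\ge 4(2-v+e+c)-2c$ simplifies exactly to $e\le 2v-4+c$ --- the inequality you already had. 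Worse, the count is consistent with $c=v-3$ and $e=3v-7$ (there it is tight: $f=3v-8$, $f_3=2v-6$, all other faces quadrilaterals), so no face-length bookkeeping of this kind, however careful, can separate $c\le v-4$ from $c=v-3$. The paper obtains $t(\mathcal G)\le v-4$ by an entirely different and unavoidably structural route: induction on $v$ to reduce to $\delta(G)\ge 4$, the classification of crossing edges into left and right, Lemma~\ref{llr1} (which cuts the plane along a cycle through two crossings and invokes the induction hypothesis on the two pieces), and Lemma~\ref{llr}, which together force every vertex to be incident to at least two uncrossed edges; only then does $p(\mathcal G)\ge v$ combined with $e(G')=t+p\le 2v-4$ give $t\le v-4$. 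None of this inductive/structural input appears in your sketch.

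The remaining parts are also only gestured at. For odd $v$ the paper needs both the strips-and-rings decomposition of extremal graphs with $\delta\ge 4$ (showing $v$ must be even) and a separate case analysis ruling out a vertex of degree at most $3$ in an extremal graph; your ``parity count of pairings'' does not engage with either, in particular not with low-degree vertices. For the constructions, ``inserting $v-4$ crossing pairs on carefully chosen edges'' of a quadrangulation hides the real difficulty: inside a quadrangular face a new edge must leave the face by crossing a boundary edge and terminate in a neighbouring face (a diagonal would join same-coloured vertices), and arranging $v-4$ such edges without multiple edges for every even $v$ is exactly what the paper's explicit parallelepiped and two-strip gluings accomplish. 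As written, the proposal identifies the correct target inequalities but does not prove the one that carries all the weight.
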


\section{\bf Drawing of a graph}

An almost planar graph can be drawn on the plane in different ways. To avoid confusion we denote a drawing of a graph~$G$ in another font:~$\cal G$.  We demand that a drawing fulfill some requirements. Let us begin with four conditions.\\

$1^\circ$. {\it Any edge intersect at most one other edge.}\\

$2^\circ$. {\it No edge intersects itself.}\\

$3^\circ$. {\it Two intersecting edges do not have a common end.}\\

$4^\circ$. {\it Any two intersecting edges intersect each other in exactly one point.}\\

\begin{lem}
For any almost planar graph~$G$ there exists a plane drawing~$\cal G$, that satisfies conditions~$1^\circ-4^\circ$.
\end{lem}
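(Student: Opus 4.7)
The plan is to choose, among all plane drawings of~$G$ satisfying condition~$1^\circ$, one drawing~${\cal G}$ that minimizes the total number of intersection points in the plane (counting every point where two curve strands meet, whether they belong to different edges or to the same edge at a self-intersection). Such a~${\cal G}$ exists since almost planarity provides at least one drawing satisfying~$1^\circ$ and the intersection count is a nonnegative integer. I would then show that~${\cal G}$ automatically satisfies each of $2^\circ$, $3^\circ$, $4^\circ$ by deriving, for every alleged violation, a modification of~${\cal G}$ that preserves~$1^\circ$ and strictly decreases the intersection count, contradicting the choice of~${\cal G}$.

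For~$2^\circ$, if an edge~$e$ self-intersects at a point~$p$, I would parametrize $e:[0,1]\to\mathbb{R}^2$, pick parameters $s<t$ with $e(s)=e(t)=p$, and replace~$e$ by the curve that follows $e|_{[0,s]}$ and then $e|_{[t,1]}$, smoothed at~$p$. The new edge is contained in the old one as a point set, so it introduces no new crossings with other edges (so~$1^\circ$ is preserved) while eliminating the self-intersection at~$p$; the total count drops, contradicting minimality. Hence ${\cal G}$ satisfies~$2^\circ$, and in particular its edges are simple curves.

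For~$3^\circ$ and~$4^\circ$ I would use the observation that if $e_1,e_2$ cross in~${\cal G}$, then by~$1^\circ$ neither of them crosses any \emph{other} edge, so the point set $e_1\cup e_2$ is disjoint from every other edge; any rerouting of $e_1,e_2$ confined to $e_1\cup e_2$ therefore preserves~$1^\circ$ automatically. For~$3^\circ$: if $e_1=va$ and $e_2=vb$ share endpoint~$v$ and cross at~$p$, the standard swap at~$p$ — let the new~$e_1$ follow old~$e_2$ from~$v$ to~$p$ and then old~$e_1$ from~$p$ to~$a$, and symmetrically for the new~$e_2$, smoothed at~$p$ — removes the intersection at~$p$ while merely relabeling every other intersection point of $e_1\cup e_2$ as a crossing of the new $e_1,e_2$ or as a self-intersection of one of the new edges, so the total strictly drops. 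For~$4^\circ$: since $2^\circ$ is already established the curves $e_1,e_2$ are simple, so if they cross at two distinct points then an innermost bigon exists, bounded by arcs of~$e_1$ and~$e_2$ joining two consecutive crossings $p_1,p_2$ and containing no further crossing of $e_1,e_2$ in its interior; swapping these two arcs and smoothing at $p_1,p_2$ removes both crossings and creates nothing new. The main technical obstacle will be this bigon step: extracting an innermost bigon uses that $e_1,e_2$ are simple curves, which is why the argument for~$4^\circ$ really does depend on first settling~$2^\circ$.
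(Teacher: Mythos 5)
Your proposal is correct and follows essentially the same strategy as the paper: pick a drawing satisfying $1^\circ$ with the minimum total number of intersection points, then refute each violation of $2^\circ$--$4^\circ$ by a local rerouting inside the offending edge(s) that preserves $1^\circ$ (since two crossing edges meet no third edge) and strictly lowers the count. The only difference is cosmetic --- the paper resolves $3^\circ$ and $4^\circ$ with a single ``reroute from the first to the last intersection point along the other edge'' surgery, while you use a strand swap at the crossing for $3^\circ$ and an innermost-bigon swap for $4^\circ$; both are valid implementations of the same idea.
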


\begin{proof}
Consider all drawings of the graph~$G$, satisfying the condition~$1^\circ$ (clearly, such  drawings exist). We choose among them a drawing~$\cal G$ with minimal  number of points of intersection of edges. 

Let us prove, that~$\cal G$ satisfies the condition~$2^\circ$. Assume the contrary and consider an edge~$e=ab$, which intersects itself.
Let this edge passes a point~$Z$ more than once. Then one can delete the part of drawing of~$e$ between the first and the last visit to the point~$Z$ (in direction from~$a$ to~$b$). As a result we obtain a drawing with less number of intersections. That contradicts our assumption.

Let us prove, that~$\cal G$ satisfies the conditions~$3^\circ$ and~$4^\circ$.
Assume the contrary, let~$\cal G$ does not satisfy any of these conditions. Then there exist two edges~$u_1u_2$ and~$v_1v_2$, having at least 
two  points of intersection.  Clearly, these edges cannot intersect other edges. Let us go along the edge~$v_1v_2$ and fix  the first and the last 
points of intersection of the edges~$u_1u_2$ and~$v_1v_2$ (let them be~$X$ and~$Y$, respectively, see figure~\ref{ap0}a). If the edges~$u_1u_2$ and~$v_1v_2$ have a common end, then they have only one common end. In this case let~$Y$ be the common end of these edges.

Let us mark  on the part~$v_1X$ of the edge~$v_1v_2$ a point~$X'$ very closely to~$X$ and replace the $X'Y$-part  of the edge~$v_1v_2$ by a $X'Y$-path~$S$ along the edge~$u_1u_2$ (see dotted line on figure~\ref{ap0}a). This path do not intersect any edge  (besides the replaced part of the edge~$v_1v_2$, but that does not matter). Thus we get rid of intersection in the point~$X$ and reduce the number of points of intersection of edges by at least~1. That  contradicts to the choice of the drawing~$\cal G$. 
\end{proof}

{\sf In what follows all drawings of almost planar graphs satisfy the conditions~${1^\circ-4^\circ}$.}

\begin{defin}
A bipartite almost planar graph~$G$ is called {\it maximal}, if after adding any edge it becomes not bipartite or not almost planar.
\end{defin}

\begin{figure}[!ht]
	\centering
		\includegraphics[width=0.9\columnwidth, keepaspectratio]{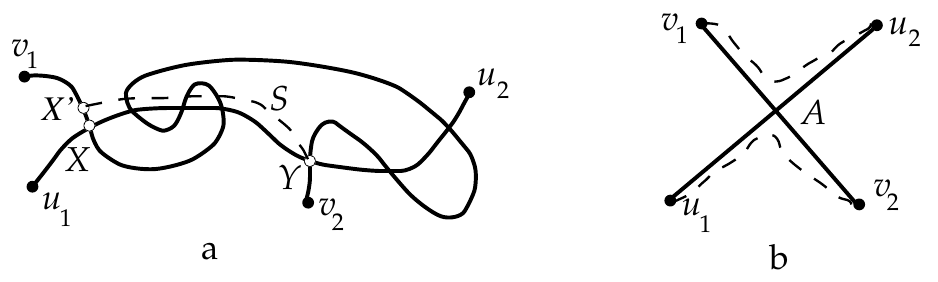}
     \caption{Intersecting edges.}
	\label{ap0}
\end{figure}

\smallskip
Consider a drawing~$\cal G$  of a bipartite almost planar graph~$G$ and its proper vertex coloring with  colors~1 and~2.   
Clearly, all intersecting edges can be divided into pairs of edges that intersect each other and do not intersect other edges.

Consider a pair of intersecting edges~$u_1u_2$ and~$v_1v_2$, let the vertices~$u_1$ and~$v_1$ have color~1 and the vertices~$u_2$ and~$v_2$ have color~2, let~$A$ be the point of intersection of these edges.
Without loss of generality we may assume that the edges are drawn such that their parts~$Au_1, Av_1, Au_2, Av_2$ are arranged clockwise  (see figure~\ref{ap0}b). 
Let us color these parts with the colors of their ends: $Au_1$ and~$Av_1$  with color~1, $Au_2$ and $Av_2$ with color~2.

Consider the part~$Au_1$. The part~$Av_1$ of the same color is on the right side of~$Au_1$, and the part~$Av_2$ of the other color is on the left side. Similarly for~$Au_2$: a part of the same color is on the right side  and a part of the other color is on the left side.
For the parts~$Av_1$ and~$Av_2$ one can see the converse: a part of the same color is on the left side  and a part of the other color is on the right side. This allows us to distinguish~$u_1u_2$ from~$v_1v_2$. 

\begin{defin}
We call the edge~$u_1u_2$~{\it right} and the edge~$v_1v_2$ {\it left}. We  do the same for any pair of intersecting edges.

An edge of a drawing~$\cal G$ is called {\it simple}, if it does not intersect other edges.  Let us denote by~$p({\cal G})$ the number of simple edges of a drawing~$\cal G$ and by~$t({\cal G})$ the number of pairs of intersecting edges of a drawing~$\cal G$.
\end{defin}

Thus the set of edges of a drawing~$\cal G$ is divided into three subsets: simple, right and left edges. The numbers of left and right edges are equal to $t({\cal G})={e(G)-p({\cal G})\over2}$.

Let us return to the pair of intersecting edges~$u_1u_2$ and~$v_1v_2$, described above, and formulate one more condition.\\

$5^\circ$ {\it Let~$u_1u_2$ and~$v_1v_2$ be a pair of intersecting edges, such that  vertices~$u_1$ and~$v_1$ have color~$1$ and vertices~$u_2$ and~$v_2$ have color~$2$. Then the edges~$u_1v_2$ and~$u_2v_1$ belong to~$E(G)$. Moreover, these edges are simple in the drawing~$\cal G$.}\\

Let us prove, that condition~$5^\circ$ holds for a drawing~$\cal G$ with minimum number of intersections of a maximal  graph~$G$. One can draw the edge~$u_1v_2$ almost along the path~$u_1Av_2$, and the edge~$u_2v_1$ almost along the path~$u_2Av_1$. (see figure~\ref{ap0}b). Since the graph~$G$ is maximal,~$u_1v_2,u_2v_1 \in E(G)$. If one of these edges is not simple in~$\cal G$, 
then we shall draw it as it was written above and decrease the number of intersections. We obtain a contradiction. Thus, the condition~$5^\circ$ holds.

\begin{defin}
A drawing of  a maximal almost planar bipartite graph is called {\it regular},  if it satisfies conditions~$1^\circ-5^\circ$.
\end{defin}

{\sf In what follows~$G$ is a maximal almost planar bipartite graph and~$\cal G$ is its regular drawing.}

\section{\bf Proof of the bound}

We will prove the bound from theorem~\ref{dap} by induction on the number of vertices.  The statement of the theorem for a graph on 4 vertices is trivial. It is the base of induction. {\sf In what follows we assume that the graph~$G$ contains more than~4 vertices, and the bound is  proved for any  graph with less number of vertices.}

\subsection{Left, right and simple edges}
We need several lemmas about properties of drawings.

\begin{lem}
\label{llr1}
Let~$ux_1,vx_2$ and~$uy_1,vy_2$ be two pairs of intersecting edges in a drawing~$\cal G$, such that~$vx_2$ and~$vy_2$ are right edges in their pairs and vertices~$u$ and~$v$ have different colors. Then~$e(G)\le 3v(G)-11$. 
\end{lem}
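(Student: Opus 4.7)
Proof plan. The approach is to apply condition $5^\circ$ to extract simple edges, use the ``right'' hypothesis to pin down the local topology at the two crossings, and then conclude by induction on $v(G)$ after deleting the two vertices $u$ and $v$.

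Step~1. Applying condition $5^\circ$ to each intersecting pair: without loss of generality $u$ has color~$1$ and $v$ has color~$2$, so $x_1, y_1$ have color~$2$ and $x_2, y_2$ have color~$1$. For the pair $(ux_1, vx_2)$, condition $5^\circ$ delivers $uv$ and $x_1x_2$ as simple edges of $\mathcal G$; for $(uy_1, vy_2)$ it delivers $uv$ and $y_1y_2$. Since $G$ has no multiple edges, $x_1 \ne y_1$ and $x_2 \ne y_2$, so the six vertices $u, v, x_1, x_2, y_1, y_2$ are pairwise distinct.

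Step~2. The ``right'' hypothesis fixes the clockwise cyclic orders of half-edges around the two crossings: $x_2, u, v, x_1$ around $A = ux_1 \cap vx_2$ and $y_2, u, v, y_1$ around $B = uy_1 \cap vy_2$. Together with the simple edge $uv$, these orderings rigidly constrain the drawing in a neighbourhood of $\{u, v\}$. I would use this rigidity, combined with the maximality of $G$, to argue both that the bipartite-valid simple edges $x_1y_2$ and $y_1x_2$ must belong to $E(G)$, and that any further neighbour of $u$ or of $v$ in $G$ forces a corresponding new simple edge among the outer vertices (so as to maintain the maximality of $G$ and respect condition $1^\circ$).

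Step~3. Let $G' = G - \{u, v\}$; this is a bipartite almost planar graph on $v(G) - 2$ vertices. The inductive hypothesis yields $e(G') \le 3(v(G) - 2) - 8 = 3v(G) - 14$ (treating the small exceptional case $v(G)-2 \in \{4, 6\}$ by hand via the inductive bound $3v - 9$). Since $e(G) = e(G') + d_G(u) + d_G(v) - 1$, the desired conclusion $e(G) \le 3v(G) - 11$ is equivalent to the combined inequality $d_G(u) + d_G(v) \le 4 + (\text{number of simple edges in }G'\text{ forced by Step 2})$, which is what Step 2 is designed to supply.

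The main obstacle is the topological case analysis in Step~2: precisely enumerating the planar regions created by the two crossings together with the simple edge $uv$, and tracking how every additional edge at $u$ or at $v$ forces a matching extra simple edge among $\{x_1, x_2, y_1, y_2\}$ and further ``outer'' vertices. Carrying this bookkeeping through carefully enough to convert the ``right/right'' coincidence at $v$ into the three-edge deficit between $3v - 8$ and $3v - 11$ is the heart of the lemma.
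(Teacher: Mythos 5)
Your Step~1 is correct, but the strategy collapses at Step~3, and Step~2 does not contain the idea needed to rescue it. Deleting $u$ and $v$ and applying the inductive bound to $G-\{u,v\}$ gives at best $e(G)\le 3(v(G)-2)-8+d_G(u)+d_G(v)-1=3v(G)-15+d_G(u)+d_G(v)$, so you would need $d_G(u)+d_G(v)\le 4$. But $u$ is already incident to $ux_1$, $uy_1$ and (by condition~$5^\circ$) $uv$, so $d_G(u)\ge 3$, and likewise $d_G(v)\ge 3$; moreover these degrees are otherwise unbounded. Nothing in Step~2 converts additional neighbours of $u$ or $v$ into a compensating deficit in $e(G-\{u,v\})$ below the generic bound $3(v(G)-2)-8$, and no mechanism for such a deficit is even sketched. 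The auxiliary claim that $x_1y_2$ and $y_1x_2$ must lie in $E(G)$ is also unsupported (condition~$5^\circ$ yields only $uv$, $x_1x_2$ and $y_1y_2$) and would in any case increase $e(G)$ rather than help bound it.

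The paper uses the ``right/right'' hypothesis for an entirely different purpose. The four edge-parts $A\to v\to B\to u\to A$ (where $A=ux_1\cap vx_2$ and $B=uy_1\cap vy_2$) form a closed curve $C$ in the plane; since each of the four edges has already used its one permitted crossing, no edge of $G$ can cross $C$. The hypothesis that both $vx_2$ and $vy_2$ are right edges forces $x_1,x_2$ onto one side of $C$ and $y_1,y_2$ onto the other, so $G$ decomposes into induced subgraphs $G_x$ and $G_y$ with $V(G_x)\cap V(G_y)=\{u,v\}$, $E(G_x)\cap E(G_y)=\{uv\}$, and $4\le v(G_x),v(G_y)<v(G)$. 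Induction applied to \emph{each piece} gives
$e(G)=e(G_x)+e(G_y)-1\le(3v(G_x)-8)+(3v(G_y)-8)-1=3v(G)-11$;
the three-edge gain comes from charging the constant $-8$ twice across a $2$-separator, which a single vertex-deletion induction cannot reproduce. This separating-curve idea is the missing ingredient in your proposal.
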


\begin{proof}  Let~$A$ be the point of intersection of the edges~$ux_1$ and~$vx_2$, and~$B$ be the point of intersection of the edges~$uy_1$ and~$vy_2$ (see figure~\ref{ap11}a).
Let us orient parts of our edges as follows:  $A\to v \to B \to u \to A$ and obtain an oriented cycle~$C$, that divide the plane into two parts. Our aim is to prove that one of these parts contains~$x_1$ and~$x_2$, and the other  contains~$y_1$ and~$y_2$.
Clearly, the vertices~$u,x_2,y_2$ have the same color, and the vertices~$v,x_1,y_1$ have the other color.   Let us go around the cycle~$C$ (in the direction mentioned above). The vertices~$x_2$ and~$x_1$ will be on the right side. (Since~$vx_2$ is a right edge and the vertices~$v$ and~$x_1$ have the same color, the path~$Ax_1$ is on the right side of the path~$Av$.) Similarly, since~$uy_1$ is a left edge, the vertices~$y_1$ and~$y_2$ will be on the left side of the cycle~$C$  (see figure~\ref{ap11}a).

\begin{figure}[!ht]
	\centering
		\includegraphics[width=1\columnwidth, keepaspectratio]{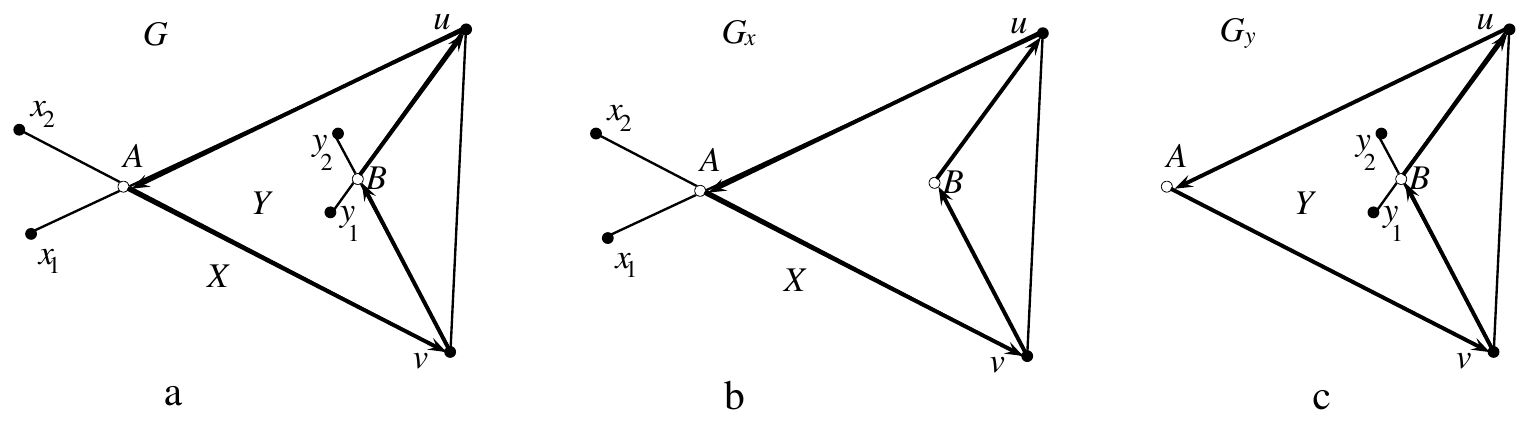}
     \caption{The graphs~$G$, $G_x$ and~$G_y$.}
	\label{ap11}
\end{figure} 

Let us cut the plane by the cycle~$C$ into the parts~$X$ (that contains~$x_1,x_2$) and~$Y$ (that contains~$y_1,y_2$). We set, that the vertices~$u$ and~$v$ belong to both parts. Let~$G_x$ be the induced subgraph of~$G$ on the vertex set~$X$ (see figure~\ref{ap11}b), and~$G_y$ be the induced subgraph of~$G$ on the vertex set~$Y$ (see figure~\ref{ap11}c). Note, that no edge of the graph~$G$ can intersect the cycle~$C$, hence~$E(G)=E(G_x)\cup E(G_y)$. Moreover, 
$$V(G_x)\cap V(G_y) =\{u,v\}, \quad  E(G_x)\cap E(G_y)=\{uv\}$$ (by condition~$5^\circ$ the vertices~$u$ and~$v$ are adjacent). 
Clearly, $$4\le v(G_x)< v(G) \quad \mbox{and}  \quad 4\le v(G_y)< v(G).$$ 
 The bound from theorem~\ref{dap} is proved for less graphs~$G_x$ and~$G_y$, hence
$$
e(G)=  e(G_x)+e(G_y)-1   \le (3 v(G_x)-8) +(3v(G_y)-8) -1 =$$
    $$ 3(v(G_x)+v(G_y)-2)-11 = 3v(G)-11.
$$
\end{proof}

\begin{rem}
If the vertices~$u$ and~$v$ in condition of lemma~\ref{llr1} would be of the same color, one can similarly prove that~$e(G)\le 3v(G)-10$. 
The only difference is that in this case the graphs~$G_x$ and~$G_y$ do not contain the edge~$uv$. However, we do not need this statement.
\end{rem}

\begin{lem}
 \label{llr}
Let~$e(G)\ge 3v(G)-10$ and~$\cal G$ be its regular drawing. Then for any vertex~$w\in V(G)$ both  numbers of right and left edges, incident to~$w$,  do not exceed the number of simple edges, incident to~$w$.
\end{lem}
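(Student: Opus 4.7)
The idea is to associate to every non-simple edge at $w$ a simple edge at $w$ supplied by condition~$5^\circ$, and then to use Lemma~\ref{llr1} to show that the associated simple edges are pairwise distinct. I will treat right edges at $w$ first; the argument for left edges is completely symmetric.

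Let $wa$ be a right edge at $w$ and let $bc$ be its crossing partner (a left edge), labeled so that $w$ and $b$ share a color class while $a$ and $c$ share the other. Condition~$5^\circ$ then supplies two simple edges $wc$ and $ab$; in particular, every right edge at $w$ yields a simple edge $wc$ at $w$, where $c$ lies in the color class opposite to $w$. To show these vertices $c$ are pairwise distinct, suppose that two right edges $wa_i, wa_j$ at $w$ have partners $b_ic_i, b_jc_j$ with $c_i=c_j=:u$. Then the two pairs of intersecting edges $(ub_i, wa_i)$ and $(ub_j, wa_j)$ fulfill the hypotheses of Lemma~\ref{llr1} with $v:=w$: the vertices $u$ and $v$ lie in opposite color classes, and $wa_i, wa_j$ are the right edges in their respective pairs. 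The lemma then gives $e(G)\le 3v(G)-11$, contradicting the hypothesis $e(G)\ge 3v(G)-10$. So the $c_i$, and hence the edges $wc_i$, are pairwise distinct. Since a right edge is never simple, the $wc_i$ are distinct from the right edges at $w$ and all contribute to the count of simple edges at $w$. This gives that the number of simple edges at $w$ is at least the number of right edges at $w$.

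The bound for left edges follows by the symmetric argument with the roles of $u$ and $v$ in Lemma~\ref{llr1} interchanged: each left edge $we$ at $w$ with right-edge partner $fg$ (labeled so that $g$ has color opposite to $w$) yields, via condition~$5^\circ$, a simple edge $wg$ at $w$; if two such $g$'s coincide, Lemma~\ref{llr1} applies with $u:=w$ and $v$ the shared vertex, again giving $e(G)\le 3v(G)-11$ and a contradiction.

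The only delicate point is the bookkeeping: in each application of Lemma~\ref{llr1} one must verify that the two shared vertices lie in opposite color classes and that the designated edges are indeed the right ones in their pairs. Once this is confirmed, the proof reduces to a single application of condition~$5^\circ$ followed by a single application of Lemma~\ref{llr1}; no further case analysis or numerical estimation is needed.
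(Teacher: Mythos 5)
Your proof is correct and follows essentially the same route as the paper: condition~$5^\circ$ assigns to each right (resp.\ left) edge at $w$ a simple edge at $w$, and Lemma~\ref{llr1} shows these simple edges are pairwise distinct, since a coincidence would force $e(G)\le 3v(G)-11$. The only cosmetic differences are that you write out the left-edge case explicitly where the paper appeals to symmetry, and that you correctly cite condition~$5^\circ$ where the paper's text says condition~$4^\circ$ (an apparent typo).
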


\begin{proof}  Clearly, it is enough to prove the statement for right edges.
Let $wx_1,\dots,wx_k$ be all right edges of~$\cal G$, incident to~$w$.
Let the edge~$wx_i$ intersect in the drawing~$\cal G$ a left edge~$y_iz_i$, such that the vertices~$y_i$ and~$x_i$ are of the same color.
By condition~$4^\circ$ then~$wy_i$ is a simple edge of~$\cal G$.
If for some~$i$ and~$j$ the vertices~$y_i$ and~$y_j$ coincide, then by lemma~\ref{llr1} we have~$e(G)\le 3v(G)-11$. That contradicts the condition of lemma. Hence~$wy_1,\dots,wy_k$ are different simple edges and their number is at least the number of right edges, incident to~$w$.
\end{proof}

Consider a plane graph~$G'$, obtained from the drawing~$\cal G$ after deleting all left edges. Clearly,~$G'$ is a bipartite graph,
$$v(G')=v(G) \quad \mbox{and} \quad {e(G')=e( G)-t({\cal G})} .$$ 
It follows from the classic bound for bipartite planar graphs, that
\begin{equation}
 \label{e1}
 e(G')\le 2v(G')-4 = 2v(G)-4. 
\end{equation}

\begin{lem}
\label{lg'} 
Let~$\delta(G)\ge 4$, $e(G)\ge 3v(G)-10$. Then the following statements hold.

$1)$ For any vertex~$w\in V(G')$ there are at least two simple edges, incident to~$w$. Hence, $\delta(G')\ge 2$. 

$2)$ The graph~$G'$ is connected. The boundary of any face of the plane graph~$G'$ contains a simple cycle with at least~$4$ edges. 

\end{lem}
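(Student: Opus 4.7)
The plan is to address the two parts separately, invoking Lemma~\ref{llr} for part~$1)$ and condition~$5^\circ$ for the connectivity portion of part~$2)$.

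For part~$1)$, at any vertex~$w$ let $s(w)$, $r(w)$, $l(w)$ denote the numbers of simple, right, and left edges of~$\cal G$ incident to~$w$. Since $e(G)\ge 3v(G)-10$, Lemma~\ref{llr} applies at~$w$ and gives $r(w)\le s(w)$ and $l(w)\le s(w)$. Combined with $\delta(G)\ge 4$, this yields
$$
4\le d_G(w)=s(w)+r(w)+l(w)\le 3s(w),
$$
so $s(w)\ge 2$. Every simple edge lies in $G'$, hence $d_{G'}(w)\ge s(w)\ge 2$ and $\delta(G')\ge 2$.

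For part~$2)$, I would first check that $G$ itself is connected. If it had two components $C_1,C_2$, then $\delta(G)\ge 4$ would force each to carry vertices of both colors. The drawings of distinct components can be arranged in disjoint regions of the plane, and each component may be redrawn (via a stereographic projection on the sphere) so that any prescribed vertex lies on its outer face; an edge between a color-$1$ vertex of $C_1$ and a color-$2$ vertex of $C_2$ could then be drawn through the outer region without any crossing, contradicting the maximality of~$G$. Connectivity now propagates to $G'$ via condition~$5^\circ$: for any $uv\in E(G)$, either $uv$ is simple or right and hence already lies in $E(G')$, or $uv$ is a left edge intersecting a right edge~$xy$ (with $u,x$ of color~$1$ and $v,y$ of color~$2$), in which case $5^\circ$ provides simple edges $uy,xv\in E(G')$ and thus a path $u,y,x,v$ in~$G'$. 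In either case $u$ and $v$ lie in the same component of~$G'$, so connectedness of $G$ transfers to $G'$.

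For the face-boundary assertion I would appeal to a general observation: in any connected plane graph with $\delta\ge 2$, every face boundary contains a simple cycle. Let $W$ be the boundary walk of a face~$F$ of $G'$. The condition $\delta(G')\ge 2$ forbids pendants, and therefore forbids any immediate reversal in $W$. If $W$ visits every vertex only once, it is itself a simple cycle; otherwise some vertex appears twice in $W$, and the sub-walk between two consecutive appearances is a strictly shorter closed walk that inherits the no-immediate-reversal property. Iterating terminates with a simple cycle contained in $W$, of length at least~$3$ by simplicity and at least~$4$ because $G'$ is bipartite.

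The main obstacle I expect is the extraction step in the last paragraph: one must verify that every intermediate sub-walk still avoids immediate reversals, so the iteration cannot collapse to a degenerate length-$2$ back-and-forth. This is exactly where the conclusion $\delta(G')\ge 2$ of part~$1)$ is used, neatly linking the two halves of the lemma.
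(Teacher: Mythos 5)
Your proposal is correct and follows essentially the same route as the paper: part~$1)$ is the same counting via Lemma~\ref{llr} (giving $d_G(w)\le 3s(w)$, hence at least $\lceil d_G(w)/3\rceil\ge 2$ simple edges), and connectivity of $G'$ is obtained exactly as in the paper by replacing each left edge with the path of two simple edges and one right edge supplied by condition~$5^\circ$. The only differences are that you fill in two details the paper leaves implicit --- the connectivity of $G$ itself (via maximality) and the extraction of a simple cycle from the face-boundary walk, where your worry about degenerate collapse is indeed resolved because every extracted sub-walk is contiguous in the original walk, so a length-$2$ collapse would already be an immediate reversal forbidden by $\delta(G')\ge 2$.
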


\begin{proof}  1) By lemma~\ref{llr} there are at least~$\lceil{d_G(w)\over 3}\rceil \ge 2$ simple edges, incident to~$w$.  

2) It is easy to see on the figure~\ref{ap0}b, that any left edge of a regular drawing can be replaced by a path of two simple edges and one right edge. Hence, the graph~$G'$ is connected.  Since~$\delta(G')\ge 2$, there is a cycle in~$G'$. Hence, the boundary of any face of~$G'$ contains a simple cycle. Since~$G'$ is a bipartite graph, any cycle of~$G'$ contains at least four edges.
\end{proof}

\subsection{Proof of the bound~$3v(G)-8$}

\begin{lem}
\label{3v8}
Let~$G$ be a bipartite graph with~$v(G)\ge 4$.  Then~$e(G)\le 3v(G)-8$. 
\end{lem}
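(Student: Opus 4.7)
The plan is to induct on $v(G)$, using the running assumption that $G$ is maximal and carries a regular drawing $\mathcal{G}$. The base case $v(G)=4$ is immediate, since any bipartite graph on four vertices has at most $4=3\cdot 4-8$ edges. For the step I take $v=v(G)\ge 5$ and split on the minimum degree.

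If $\delta(G)\le 3$, pick $w\in V(G)$ with $d_G(w)\le 3$. Then $G-w$ is bipartite and almost planar (any regular drawing of $G$ restricts to a drawing of $G-w$ satisfying conditions $1^\circ$--$4^\circ$), with $v-1\ge 4$ vertices, so the inductive hypothesis gives $e(G-w)\le 3(v-1)-8$ and hence
\[
e(G)\le e(G-w)+d_G(w)\le (3v-11)+3=3v-8.
\]

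If $\delta(G)\ge 4$, the conclusion is automatic when $e(G)\le 3v-10$, so I may assume $e(G)\ge 3v-9$. This activates the hypotheses of both Lemma~\ref{llr} and Lemma~\ref{lg'}. Lemma~\ref{lg'}(1) furnishes at least two simple edges at each vertex, so summing over all vertices gives $2p(\mathcal{G})\ge 2v$ and hence $p(\mathcal{G})\ge v$. Lemma~\ref{llr} bounds the number of right edges at each vertex by the number of simple edges there; summing over all vertices gives $2t(\mathcal{G})\le 2p(\mathcal{G})$, i.e.\ $t(\mathcal{G})\le p(\mathcal{G})$. Inequality \eqref{e1} says $e(G')=p(\mathcal{G})+t(\mathcal{G})\le 2v-4$, so these combine to yield $t(\mathcal{G})\le (2v-4)-p(\mathcal{G})\le v-4$, and therefore
\[
e(G)=p(\mathcal{G})+2t(\mathcal{G})=e(G')+t(\mathcal{G})\le (2v-4)+(v-4)=3v-8.
\]

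The whole proof is a clean assembly of three earlier bounds: $p(\mathcal{G})\ge v$ from Lemma~\ref{lg'}(1), $t(\mathcal{G})\le p(\mathcal{G})$ from Lemma~\ref{llr}, and the bipartite planar bound \eqref{e1} on $G'$. The only step that rewards a second look is extracting $p(\mathcal{G})\ge v$ from Lemma~\ref{lg'}(1): its stated corollary $\delta(G')\ge 2$ is too weak on its own, and one really must use the stronger per-vertex count of two \emph{simple} edges. Given that, the rest is bookkeeping and I do not anticipate any serious obstacle.
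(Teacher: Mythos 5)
Your proof is correct and follows essentially the same route as the paper: induction with base $v=4$, deletion of a vertex of degree at most $3$ in one case, and in the case $\delta(G)\ge 4$ the reduction to a maximal graph with a regular drawing, the count $p(\mathcal{G})\ge v$ from the two simple edges per vertex, and the bipartite planar bound on $G'$ giving $t(\mathcal{G})\le v-4$ and hence $e(G)=e(G')+t(\mathcal{G})\le 3v-8$. The intermediate deduction $t(\mathcal{G})\le p(\mathcal{G})$ from Lemma~\ref{llr} is harmless but not actually needed, since the final estimate already follows from $p(\mathcal{G})\ge v$ and \eqref{e1}.
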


\begin{proof} 
\q1.  {\it If there is a vertex of degree at most~$3$ in the graph~$G$}, then we delete from~$G$ this vertex and all edges incident to it. We obtain a bipartite graph~$G_1$ with~$v(G_1)=v(G)-1$,  which is clearly almost planar. By induction assumption we have
$$e(G_1)\le 3v(G_1)-8 = 3v(G)-8-3,$$  whence it follows the statement we want to prove for the graph~$G$.

\smallskip 
\q2.  {\it  Now we consider the case~$\delta(G)\ge 4$.}

\noindent It is enough to prove the statement for a maximal almost planar bipartite graph~$G$ with~$e(G)\ge 3v(G)-10$. Consider its regular drawing~$\cal G$ and the plane graph~$G'$ constructed above. It follows from lemma~\ref{lg'}, that for any vertex~$v\in V(G)$ there are at least two simple edges incident to~$v$. Whence it follows, that~$p({\cal G})\ge v(G)$. 

By the inequality~(\ref{e1}) we have
\begin{equation}
 \label{ep2}
2v(G)-4\ge e(G')=t({\cal G})+p({\cal G}),
\end{equation}
whence it follows, that
\begin{equation}
 \label{ep3}
t({\cal G})\le v(G)-4 \quad \mbox{ and } \quad e(G)=t({\cal G})+e(G')\le 3v(G)-8,
\end{equation}
what was to be proved.
\end{proof}

\subsection{Structure of graphs with~$e(G)=3v(G)-8$}
Let~$G$  be a bipartite almost planar graph, such that~$e(G)= 3v(G)-8$. 

\subsubsection{Case of minimal degree at least~$4$}
\label{delta4}

Let~$v(G)\ge 8$ and~$\delta(G)\ge 4$.
Consider a regular drawing~$\cal G$ and the plane graph~$G'$, that consists of simple and right edges of the drawing~$\cal G$. Clearly, the statements of lemmas~\ref{llr} and~\ref{lg'} hold for the graphs~$G$ and~$G'$.

\begin{cor}
\label{c3v8}
Let~$G$ be a bipartite almost planar graph with~${v(G)\ge 4}$, \linebreak ${\delta(G)\ge 4}$ and~$e(G)= 3v(G)-8$. Then the following statements hold.

$1)$ The graph~$G$ is maximal. Any vertex of~$G$ is incident to precisely two simple edges of  its regular drawing~$\cal G$.

$2)$ $\Delta(G')\le 4$.
\end{cor}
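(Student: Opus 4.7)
The proof is essentially a trace through the inequality chain of Lemma~\ref{3v8}, noting where equality must hold. Here is the plan.

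For part~$1)$, the maximality of~$G$ is immediate from Lemma~\ref{3v8}: if one could add an edge keeping the graph bipartite and almost planar, then the resulting graph would have~$3v(G)-7$ edges, violating the bound~$3v(G)-8$ that Lemma~\ref{3v8} proves. So~$G$ is maximal, and hence admits a regular drawing~$\cal G$. Now I would revisit the computation~(\ref{ep3}): since~$\delta(G)\ge 4$ and~$e(G)=3v(G)-8\ge 3v(G)-10$, the hypotheses of Lemma~\ref{lg'} apply, giving~$p({\cal G})\ge v(G)$; plugging this into~(\ref{ep2}) yields
\begin{equation*}
2v(G)-4\ge t({\cal G})+p({\cal G})\ge t({\cal G})+v(G),
\end{equation*}
so~$t({\cal G})\le v(G)-4$, and then~$e(G)=t({\cal G})+e(G')\le (v(G)-4)+(2v(G)-4)=3v(G)-8$. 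The assumption~$e(G)=3v(G)-8$ forces equality in every step, in particular~$p({\cal G})=v(G)$ and~$t({\cal G})=v(G)-4$ and~$e(G')=2v(G)-4$. Since the sum of the simple-degrees equals~$2p({\cal G})=2v(G)$, while each of the~$v(G)$ vertices has simple-degree at least~$2$ (by Lemma~\ref{lg'}), each vertex must be incident to \emph{exactly} two simple edges of~$\cal G$.

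For part~$2)$, I would combine the conclusion of part~$1)$ with Lemma~\ref{llr}. For any vertex~$w\in V(G)$, Lemma~\ref{llr} (applicable since~$e(G)\ge 3v(G)-10$) bounds the number of right edges incident to~$w$ by the number of simple edges incident to~$w$, which is~$2$ by part~$1)$. Since~$G'$ consists of exactly the simple and right edges of~$\cal G$, we conclude
\begin{equation*}
d_{G'}(w)=(\text{simple edges at }w)+(\text{right edges at }w)\le 2+2=4,
\end{equation*}
so~$\Delta(G')\le 4$.

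There is no real obstacle here; the only thing to be careful about is making sure the equality case is extracted from the \emph{same} chain of inequalities used in Lemma~\ref{3v8}, so that~$p({\cal G})=v(G)$ is actually forced (rather than only~$\ge v(G)$). Once this is observed, both claims drop out immediately from Lemmas~\ref{llr} and~\ref{lg'}.
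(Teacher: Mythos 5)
Your proposal is correct and follows essentially the same route as the paper: maximality from Lemma~\ref{3v8}, then forcing equality in the chain~(\ref{ep2})--(\ref{ep3}) to get~$p({\cal G})=v(G)$ and hence exactly two simple edges per vertex, and finally combining this with Lemma~\ref{llr} to bound~$d_{G'}$ by~$2+2$. The paper phrases both steps as contradictions rather than as an explicit equality-forcing count, but the content is identical.
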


\begin{proof} 1) It follows from lemma~\ref{3v8}, that the graph~$G$ is maximal. Let us return to the proof of lemma~\ref{3v8}. Since~$\delta(G)\ge 4$, we have case~2 of the proof, hence,   any vertex is incident to at least two simple edges. If some vertex is incident to more than two simple edges, then~$p({\cal G})>v(G)$ and it follows from the inequalities~$(\ref{ep2})$ and~$(\ref{ep3})$, that $e(G)< 3v(G)-8$. We obtain a contradiction. 

2) Let~$v\in V(G)$. By lemma~\ref{llr} in the graph $G$ the number of simple edges, incident  to the vertex~$v$ is at least the number of right edges, incident to~$v$. Hence if~$d_{G'}(v)\ge 5$, then~$v$ is incident to at least three simple edges, that contradicts to item~1.
\end{proof}

\smallskip
\q1.   {\it Faces of the graph~$G'$ and left edges.}

\noindent
It follows from~$e(G)=3v(G)-8$, that equality must be attained in the inequality~(\ref{e1}). Hence, the boundary of any face~$D$ of the graph~$G'$ contains precisely~4 edges. By lemma~\ref{lg'}, this boundary contains a cycle with at least~4 edges. Thus, the boundary of~$D$ is a simple cycle of length~4.  Whence it follows that the graph~$G'$ is biconnected.

{\sf Thus we deal with biconnected plane graph~$G'$. Any face of~$G'$ is a quadrangle and~$\Delta(G')\le 4$.}

Consider left edges of the graph~$G$, that we have deleted. Any left edge~$f$ intersect exactly one right edge, which divides~$f$ into two {\it half-edges}.
These half-edges lie in two neighboring faces of~$G'$, and the common part of their bounds contains the right edge which intersects~$f$.

\begin{figure}[!ht]
	\centering
		\includegraphics[width=0.6\columnwidth, keepaspectratio]{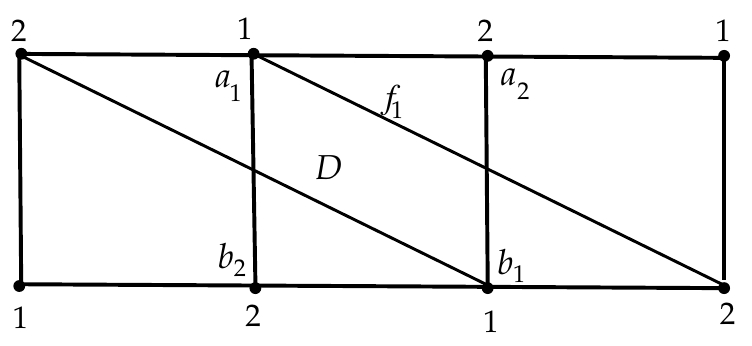}
     \caption{Left edges on faces-quadrangles of the graph~$G'$.}
	\label{ap3}
\end{figure} 

\smallskip

Let~$D=a_1a_2b_1b_2$ be a face of the graph~$G'$, the vertices~$a_1$ and~$b_1$ have color~1 and the vertices~$a_2$ and~$b_2$ have color~2. 
Assume, that this face contains a half-edge~$f_1$ (a part of a left edge~$f$) with the end~$a_1$. 

The other end of the half-edge~$f_1$ is on an edge~$e$ of the face~$D$. Since edges~$e$ and~$f$ do not have a common end and~$f$ is a left edge,  it is easy to see, that~$e=a_2b_1$ (see figure~\ref{ap3}).

Thus we have proved, that   {\sf not more than one half-edge lying in~$D$ can be incident to a vertex of the face~$D$. Moreover, the other end of this half-edge must lie on the certain edge.}
A half-edge incident to~$a_1$ must have the other end on the edge~$a_2b_1$, a half-edge incident to~$a_2$ must have the other end on the edge~$b_1b_2$, a half-edge incident to~$b_1$ must have the other end on the edge~$b_2a_1$, and a half-edge incident to~$b_2$ must have the other end on the edge~$a_1a_2$.

Let us return to the half-edge~$f_1$, lying in~$D$,  that is incident to~$a_1$ and have an end on the edge~$a_2b_1$. Since two different half-edges, lying in~$D$, cannot intersect each other, at least one other half-edge can lie in~$D$~--- a half-edge incident to~$b_1$ with  the other end on the edge~$a_1b_2$ (see figure~\ref{ap3}). 

\begin{rem}
\label{rpr}
We have that the boundary of any face of the graph~$G'$ contains at most two right edges.  If it contains two right edges, then they are opposite edges of this boundary. 
\end{rem}

\smallskip

\q2. {\it Strips and rings.}

\noindent
Let us build an auxiliary graph~$F$, which vertices are faces of the graph~$G'$ and two vertices are adjacent if and only if the boundaries of correspondent faces have a common right edge. By remark~\ref{rpr}, the degree of any vertex in the graph~$F$ is at most two, hence, this graph is a union of several simple cycles and simple paths. Clearly,~$e(F)$ is equal to the number of right edges of~$\cal G$, i.e., to $t({\cal G})=v(G)-4$. The number of vertices of~$F$ is equal to the number of faces of~$G'$,  which is by Euler's formula equal to~$v(G)-2$. Thus, the graph~$F$ consists of two paths and, possibly, several cycles.

Let us cut the plane  by drawings of all simple edges of~$G'$ (along these edges). {\it As a result of cutting, the plane is divided into several parts and each simple edge is divided into two edges with the same ends} --- we call these edges by {\it halves}.

{\sf We consider two halves of one simple edge as different edges even in the case when these halves belong to the same part of plane!}

A pair of faces of~$G'$ has a common part of boundary after cutting if and only if these faces had a common right edge before cutting. Thus, {\sf any part of the plane formed after cutting consists of all faces of some connected component of the graph~$F$.} 

\begin{defin}
Parts of plane, correspondent to components-paths of the graph~$F$, are called {\it strips}.  Parts of plane, correspondent to components-cycles of the graph~$F$, are called  {\it rings}.

A {\it boundary edge} of a strip or a ring is any half of a simple edge, which belongs to this strip or ring. 

A {\it boundary}  of a strip or a ring is any connected component of the subgraph consisting of boundary edges of  this strip or ring. 

Any strip is a plane graph, isomorphic to a rectangle with side~1, divided into cells $1\times 1$. We call four vertices of the strip, correspondent to angles of this rectangle, by {\it corners} of the strip. We call faces of the strip, that contain its corners, by {\it extreme faces}.

All strips and rings, obtained from the graph~$G'$ after cutting, are called  {\it parts of the graph~$G'$}.
\end{defin}

\begin{rem}
\label{kopo}
1) It follows from the proved above, that there are precisely two strips among the parts of the graph~$G'$. 

2) Operating in the reverse order, one can glue the graph~$G'$ from its parts (strips and rings).

3) A strip with~$n$ faces has one boundary, which is a cycle of length~$2n+2$. A ring with~$n$ faces has two boundaries, and each of these boundaries is a cycle of length~$n$.
\end{rem}

Let~$P$ be a graph on the vertex set~$V(G)$, which edges are simple edges of~$G'$.  By corollary~\ref{c3v8} any connected component of the graph~$P$ is a simple cycle. 

It is easy to see, that edges, which halves form some boundary~$Z$, are edges of one connected component of the graph~$P$, i.e. edges of a simple cycle. We denote this cycle by~$C(Z)$. Since~$C(Z)$ is a simple cycle, {\sf the boundary~$Z$ cannot contain two halves of one edge.}

Let~$Z_1,\dots, Z_k$ be all boundaries of the parts of~$G'$.  It follows from written above, that boundaries~$Z_1,\dots, Z_k$ can be divided into pairs~$Z_i, Z_j$, for which~$C(Z_i)=C(Z_j)$. We call such boundaries~{\it congruent}.

Let us construct an auxiliary graph~$L$, which vertices are parts of the graph~$G'$ (i.e., strips and rings), and two vertices are adjacent if and only if the correspondent parts have congruent boundaries. Clearly, the graph~$L$ is connected. It follows from proved above, that two parts with common boundary edge  have congruent boundaries. Hence the degree of any ring in the graph~$L$ is at most~2, and the degree of any strip is at most~1. Consequently, the graph~$L$ is a chain, which leaves are two strips of the graph~$G'$, and all other vertices (if such vertices exist) are rings. 

Clearly, two boundaries of a ring have equal numbers of vertices, and two congruent boundaries too. Hence, all boundaries have equal numbers of vertices. Let one of the strips have~$k$ faces.  Then the length of its boundary is equal to~${2k+2}$, as soon as the length of all other boundaries. Since any vertex of~$G'$ belongs to exactly two boundaries, we have, that the number of vertices of the graph~$G'$ is equal to~$(2k+2)\cdot e(L)$, i.e., this number is even.

\smallskip
{\sf Thus we have proved, that in the case where~$e(G)=3v(G)-8$ and~${\delta(G)\ge 4}$ the number of vertices of the graph~$G$ is even.}

\subsubsection{Case of minimal degree at most~$3$}
\label{delta3}

  Let~$H$ be a bipartite almost planar graph with
$$v(H)\ge 5, \quad e(H)=3v(H)-8, \quad \delta(H)\le  3.$$  

\noindent  By lemma~\ref{3v8} the graph~$H$ is maximal, let us consider its regular drawing~$\cal H$.
Starting with the graph~$H$, we  delete from the graph we have a vertex of   degree at most three until we obtain a bipartite almost planar graph~$G$ either with~$v(G)=4$, or with~$\delta(G)\ge 4$. In both cases we have~$e(G)\le 3 v(G)-8$. Since~$e(H)=3v(H)-8$,  each time we have deleted a vertex of degree~3 and~$e(G)= 3 v(G)-8$. 
It remains to prove that we cannot add a vertex of degree~3 to the graph~$G$, such that the resulting graph remains bipartite and almost planar. 

In the case~$v(G)=4$ the graph~$G$ is a cycle with~4 vertices, for which the statement we prove is obvious.
Let~$v(G)>4$. Clearly, the graph~$G$ is maximal. Let us delete from the regular drawing~$\cal H$ all vertices of~${V(G)\setminus V(H)}$. We obtain a regular drawing~$\cal G$ of the graph~$G$ (it is easy to see, that conditions~$1^\circ-5^\circ$ do not break after deleting vertices).

We have proved, that~$v(G)$ is even and studied the structure of such graphs. Let us consider the subgraph~$G'$  with simple and right edges of~$G$. All faces of~$G'$ are quadrangles. Let the vertex~$a$ which we want to add lie on a face~$D$. 
We need to join the vertex~$a$ to three vertices of~$G$ of the same color. Consider three cases.

\smallskip \goodbreak
\q{a}. {\it $D$ is not an extreme face of a strip.}

\noindent Then without loss of generality we may assume, that two left edges are incident to  vertices of color~2 of this face and intersect two opposite edges of the boundary of~$D$.  These left edges divide~$D$ into three parts. If~$a$ is disposed in one of two parts-triangles (see figure~\ref{ap2}a, let it be the right triangle), then the vertex~$a$ can be adjacent only to the vertices of the face~$D'$, neighboring to~$D$. (In this case  edges incident to~$a$ can intersect only the common edge of~$D$ and~$D'$, otherwise the graph would be not almost planar.) But there are no three vertices of the same color in~$D'$. 

If~$a$ lies in the central part of the face~$D$ (see figure~\ref{ap2}a), then~$a$ can be joined to only two vertices: the vertices of color~2 of the face~$D$.

\begin{figure}[!ht]
	\centering
		\includegraphics[width=\columnwidth, keepaspectratio]{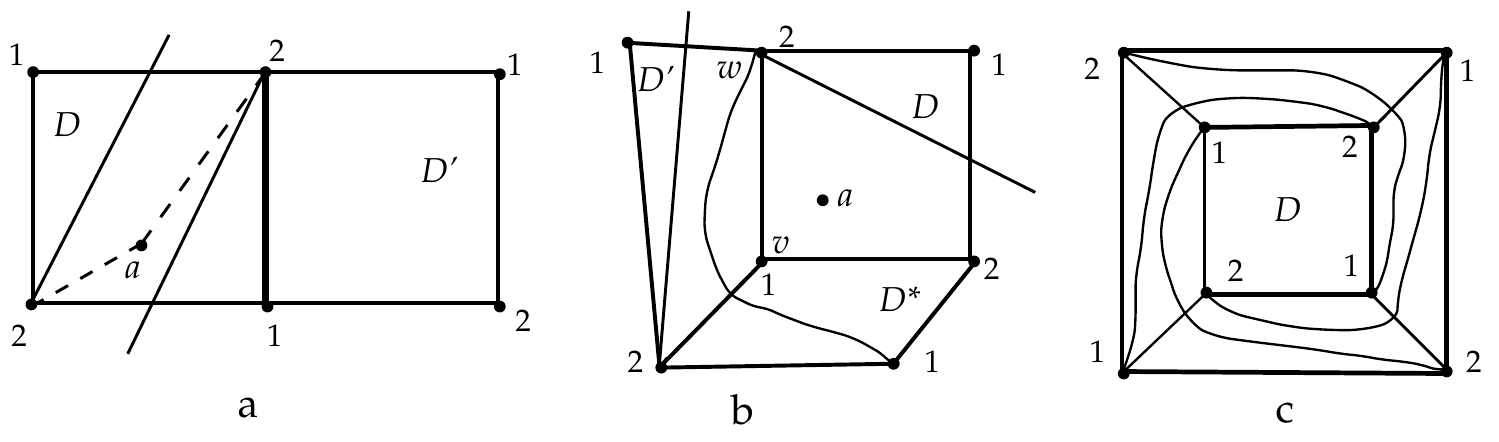}
     \caption{Disposition of the vertex~$a$.}
	\label{ap2}
\end{figure} 

\smallskip 
\q{b}. {\it $D$ is an extreme face of a strip, consisting of more than one face.}

\noindent 
We need the following simple lemma.

\begin{lem}
\label{ugly} A vertex~$x\in V(G')$ is a corner of a strip precisely~$4-d_{G'}(x)$ times.
\end{lem}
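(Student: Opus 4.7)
The plan is to count, for the vertex~$x$, the number of copies of~$x$ that appear in the parts of~$G'$ after cutting and the sum of their degrees inside those parts; these two quantities together determine the number of corner occurrences of~$x$.

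First, by corollary~\ref{c3v8}(1) the vertex~$x$ is incident to exactly two simple edges of~$\cal G$, so the remaining $d_{G'}(x)-2$ edges at~$x$ are right edges. Since cutting is performed only along simple edges, these two simple edges split a small neighbourhood of~$x$ into exactly two sectors, so $x$ produces exactly two copies distributed among the parts of~$G'$. Next, from the rectangular description of a strip (a $1\times n$ grid of quadrangles) and the annular description of a ring (a cyclic chain of $n$ quadrangles) given in the preceding subsection, every vertex of a part lies on a boundary cycle and is therefore incident to exactly two boundary half-edges, plus at most one right edge: corners of strips have no right edge (degree~$2$), while non-corner vertices of strips and all vertices of rings have exactly one right edge (degree~$3$). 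In particular, a copy of~$x$ is a corner of a strip if and only if its degree inside its part equals~$2$.

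Let $c$ denote the number of corner occurrences of~$x$; then the remaining $2-c$ copies have degree~$3$, so the sum of the degrees of the copies of~$x$ equals $2c+3(2-c)=6-c$. The same sum can also be computed by distributing the edges at~$x$ among the sectors: each of the two simple edges contributes one half-edge to each of the two sectors, producing $4$ boundary half-edges at copies of~$x$ in total, while each of the $d_{G'}(x)-2$ right edges lies in exactly one sector and contributes~$1$, giving grand total $d_{G'}(x)+2$. Equating $6-c=d_{G'}(x)+2$ yields $c=4-d_{G'}(x)$, as required. The only step needing any care is the local degree inventory inside strips and rings, but it is immediate from the rectangular and annular descriptions already in hand, so I do not expect any substantial obstacle.
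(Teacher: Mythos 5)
Your proof is correct, but it takes a somewhat different route from the paper's. The paper argues locally and face by face: for each face $M$ of $G'$ whose boundary contains $x$, the vertex $x$ is a corner of a strip at $M$ if and only if both edges of $M$ incident to $x$ are simple; combined with Corollary~\ref{c3v8} (exactly two simple edges at $x$) this settles $d_{G'}(x)=2$ and $3$ immediately, and the case $d_{G'}(x)=4$ is handled by observing that the two simple and two right edges must alternate around $x$, so no face has two simple edges at $x$. You instead count the two copies of $x$ created by cutting along the simple edges and double-count the sum of their degrees in the parts: once as $2c+3(2-c)$ using the fact that every vertex of a strip or ring has degree $2$ (corner) or $3$ (otherwise), and once as $4+(d_{G'}(x)-2)$ by distributing the edges at $x$ over the two sectors. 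Your version buys uniformity --- no separate alternation argument for degree $4$, since the impossibility of a degree-$4$ copy is absorbed into the already-established structure of strips and rings --- at the price of leaning on that global structural description; the paper's version is more elementary and purely local, needing only Corollary~\ref{c3v8} and one small planarity observation. Both are valid, and your degree inventory for strips and rings is indeed justified by the definitions and Remark~\ref{kopo} in the preceding subsection.
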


\begin{proof}
 Consider a face~$M$, which boundary contains the vertex~$x$. Clearly, both edges of~$M$ incident to~$x$ are simple if and only if~$M$ is an extreme face of a strip and~$x$ is a corner of this strip. In any other case one of these two edges is simple and the other one is right. 

By corollary~\ref{c3v8}, the vertex~$x$ is incident to exactly two simple edges. That immediately implies  the statement we prove for the cases, where~$d_{G'}(x)=2$ or~$d_{G'}(x)=3$. If~$d_{G'}(x)=4$, then~$x$ is incident to two simple and two right edges, and, clearly, these edges alternate in plane  drawing. Hence a vertex of degree~4 cannot be a corner of a strip. 
\end{proof}

Let the face~$D$  have corners~$w$ and~$v$.
Clearly, precisely one left edge of the graph~$G$ intersects the face~$D$ (see figure~\ref{ap2}b). This left edge is incident to one of two corners of~$D$  (let it be the corner~$w$ of color~2). Let us show, that in this case the second corner~$v$ has degree~$d_{G'}(v)=3$. 

It follows from lemma~\ref{ugly}, that~$2\le d_{G'}(v)\le 3$. 
Let~$d_{G'}(v)=2$. Then in the graph~$G$ the vertex~$v$ is incident to two simple edges and two left edges, as it is proved in lemma~\ref{llr}. Hence, each of two faces of~$G'$, which boundary contains~$v$, must be  intersected by a half-edge, incident to~$v$. But there is no such half-edge in the face~$D$. We obtain a contradiction.

Thus,~$d_{G'}(v)=3$. Consider two different from~$D$ faces, which contain~$v$: let it be~$D'$ (which contains~$w$) and~$D^*$, see figure~\ref{ap2}b.  By lemma~\ref{ugly}, the common edge of $D'$ and $D^*$ is a right edge and is intersected by a left edge. There is the only possibility to draw this left edge: it must join the vertex~$w$ of the face~$D'$ to a vertex of color~1 of the face~$D^*$ (clearly, this is the vertex of~$D^*$, opposite to~$v$, see figure~\ref{ap2}b).

The case where the vertex~$a$ is disposed in a part-triangle of the face~$D$, is similar to the case considered above. Let~$a$ lies in the part of~$D$, which contains the corners~$v$ and~$w$. Since edges incident to~$a$ cannot intersect a left edge, that lie in the faces~$D'$ and~$D^*$, it is easy to see, that the vertex~$a$ cannot be joined to three vertices of the same color.

\smallskip 
\q{c}. {\it $D$ is the only face of some strip.}
\noindent

Hence the graph~$G'$ consists of two strips (each of them consists of one face, i.e., has four vertices), and, possibly, several rings. Since~$v(G)>4$, among the parts of~$G'$ there is at least one ring. Clearly, all rings of~$G'$ has 4 faces and each strip is glued together with a ring (see figure~\ref{ap2}c). Then it is easy to see, that the  vertex~$a$ disposed on the face~$D$ cannot be adjacent to more than two vertices of the same color.

\smallskip {\sf Thus we have proved that an almost planar graph~$H$ with~$e(H)=3v(H)-8$,  $v(H)>4$ and~$\delta(G) \le 3$ does not exist.}

\subsection{The end of the proof of the bound}

It follows from conclusions of two previous sections, that {\sf a bipartite almost planar graph~$G$ with~$e(G)=3v(G)-8$ has even number of vertices.} (It is proved in section~\ref{delta3}, that~$\delta(G)\ge 4$. It is proved for such graphs in section~\ref{delta4}, that~$v(G)$ is even.)

Note, that for each~$v\ge 4$ we denote by~$\beta(v)$ the maximal number of edges in a bipartite almost planar graph on~$v$ vertices. We have proved, that~$\beta(v)\le 3v-8$ for all~$v\ge 4$   and~$\beta(v)\le 3v-9$ for odd~$v$.

It remains to note, that~$\beta(6)\le 9=3\cdot 6-9$, since maximal number of edges in a bipartite graph on~6 vertices is attained at complete bipartite graph~$K_{3,3}$ and is equal to~9.

The proof of the bound is finished.

\section{Extremal examples}

Let us consider a parallelepiped~$1\times 1\times k$ on the coordinate grid. All nodes of the grid lying inside this parallelepiped and on its boundary  are vertices of the graph~$G'_k$, and unit segments between these nodes are edges of~$G'_k$. Clearly, the graph~$G'_k$ can be glued from two strips (upper and lower quadrangles~$1\times k$) and one ring, containing remaining~${2k+2}$ faces of the graph~$G'_k$. It is easy to draw left edges in the strips and in the ring. Clearly, we obtain an almost planar bipartite graph~$G_k$ with~$v(G_k)=4k+4$, $e(G_k)=3\cdot v(G_k)-8$ and without multiple edges as a result. Thus, we have shown that~$\beta(v)=3v-8$ for ${v=4\ell\ge 8}$.

\begin{figure}[!ht]
	\centering
		\includegraphics[width=0.7\columnwidth, keepaspectratio]{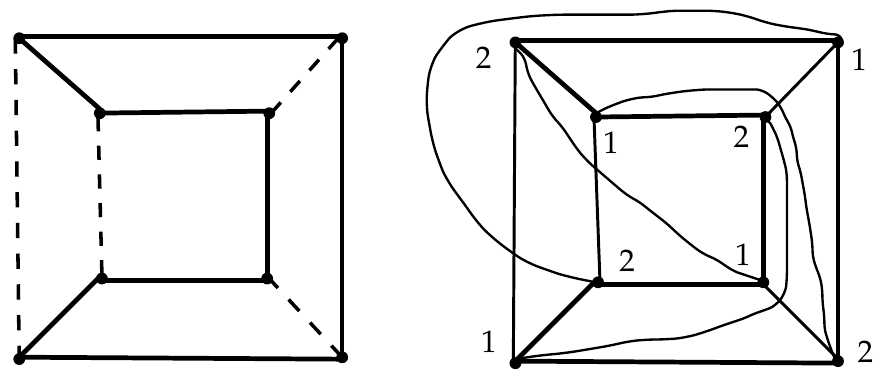}
     \caption{Cube~$1\times 1\times 1$  and almost planar graph~$K_{4,4}$.}
	\label{ap4}
\end{figure} 

Note, that a parallelepiped~$1\times 1\times k$ can be glued from two strips. An example for a cube~$1\times 1\times 1$ is shown on figure~\ref{ap4}: on the left side one can see dividing  the  cube into two strips, and on the right side~--- the resulting almost planar bipartite graph. Note, that this graph is complete bipartite graph~$K_{4,4}$.

Moreover, one can glue a parallelepiped~$1\times k \times n$ from two strips~$1\times k$  and~$n$ rings. On this base one can construct a bipartite almost planar graph~$G_{k,n}$ with~$v(G_{k,n})=2(k+1)(n+1)$ and~$e(G_{k,n})= 3v(G_{k,n})-8$. However, even numbers of type~$2p$, where~$p$ is prime, cannot be represented as~$2(k+1)(n+1)$. Thus we need to construct more complicated examples.

\begin{figure}[!ht]
	\centering
		\includegraphics[width=\columnwidth, keepaspectratio]{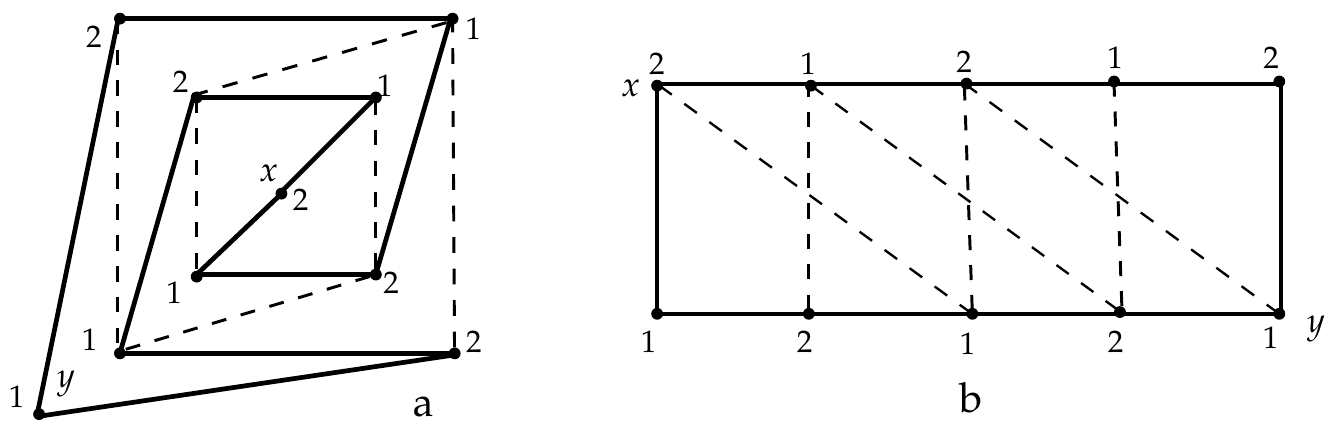}
     \caption{Gluing an almost planar graph from two strips.}
	\label{ap5}
\end{figure}

Let us construct an almost planar graph~$H_k'$ on~$4k+6$ vertices (where ${k\ge 1}$) from two strips~$1\times (2k+3)$. An example for~$k=1$ is shown on figure~\ref{ap5}a. In the graph~$H_1'$ shown on this picture there are two vertices of degree~2 (they have different colors, let them be~$x$ of color~2 and~$y$ of color~1), four vertices of degree~3 (two vertices of each color), all other vertices have degree~4. Simple edges of the graph~$H_1'$  are shown as solid lines, right edges are shown as dotted lines.  It is shown how to glue this graph from two strips~$1\times 4$. It is not difficult to add on this graph to a graph~$H_k'$ on~$4k+6$ vertices (by adding~$k-1$ new squares). 

It is shown on the  figure~\ref{ap5}b, how to draw left edges in each strip. It is important, that in both strips --- parts of~$H'_k$ ---$x$ and~$y$ are two corners, incident to left edges. It is easy to verify, that for~$k\ge 1$ the resulting graph has no multiple edges. 

We draw left edges in the graph~$H_k'$  as it is shown and obtain an almost planar bipartite graph~$H_k$ with~$v(H_k)=4k+6$ and~$e(H_k)=3v(H_k)-8$. Thus we have proved, that~$\beta(v)=3v-8$ for~$v=4k+6$, where~$k$ is a positive integer. 

\begin{rem}
1)  Note, that if we try to construct such a graph for~$k=0$ then  the edge~$xy$ would be drawn in both strips, hence, the graph would have multiple edges. 

2) If we draw in any strip right edges instead of left edges (i.e., the other diagonals of rectangles~$1\times 2$), then multiple edges appear. One of them is the edge joining the corner of this strip which has degree~3 in the graph~$H_k'$ to the correspondent vertex, another one is a side of upper or lower square.

2) Note also that there is a lot of other almost planar bipartite graphs, that can be constructed from two strips of the same length.
\end{rem}

\smallskip
Let us consider small values of~$v$. For~$v=4$ and~$v=5$ extremal examples are complete bipartite graphs~$K_{2,2}$ and~$K_{2,3}$, respectively. These graphs are planar, and, hence, they are almost planar graphs, too. Note, that~$e(K_{2,2})=4=3\cdot 4-8$ and~$e(K_{2,3})=6=3\cdot 5-9$.  Thus we have proved, that~$\beta(4)=4$ and~$\beta(5)=6$.

For~$v=6$ and~$v=7$ extremal examples are complete bipartite graphs~$K_{3,3}$ and~$K_{3,4}$, respectively. These graphs are almost planar, since the graph~$K_{4,4}$, shown on figure~\ref{ap4}, is almost planar. Note, that~$e(K_{3,3})=9=3\cdot 6-9$ and $e(K_{3,4})=12=3\cdot 7-9$.  Thus we have proved, that~$\beta(6)=9$ and~$\beta(7)=12$.

It remains to show, that~$\beta(v)=3v-9$ for odd~$v\ge 9$.  For this it is enough to add one  vertex and two edges to an almost planar bipartite graph with~$v-1$ vertices and~$3(v-1)-8$ edges (note, that~$v-1\ge 8$ is even, hence, such a graph was constructed before). One can add a vertex~$a$ to this graph and join it to two vertices of the same color  as it is shown on figure~\ref{ap2}a.

The proof of the theorem is finished.

\section{On complete bipartite graphs}

In the end of our paper we turn attention to complete bipartite graphs and find out all almost planar graphs among them.
The graphs~$K_{1,n}$ and~$K_{2,n}$ are planar, and, consequently, almost planar. We have  ascertained, that~$K_{3,3}$, $K_{3,4}$ and~$K_{4,4}$ are almost planar graphs.

It is clear from figure~\ref{ap6}, that the graph~$K_{3,6}$ is almost planar. Hence its subgraph~$K_{3,5}$ is almost planar, too. Since~$e(K_{4,5})=20>3\cdot 9-9$, the graph~$K_{4,5}$ is not almost planar.

\begin{figure}[!ht]
	\centering
		\includegraphics[width=0.4\columnwidth, keepaspectratio]{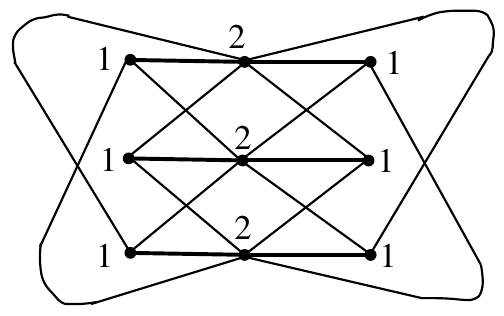}
     \caption{The graph~$K_{3,6}$.}
	\label{ap6}
\end{figure} 

Let us show, that the graph~$K_{3,7}$ is not almost planar, i.e., this graph cannot be drawn on the plane such that each edge intersect at most one other edge.

At first we note, that~$\cr(K_{3,3})\ge 1$. Let us draw the graph~$K_{3,5}$  on the plane. It contains 10 subgraphs, isomorphic to~$K_{3,3}$, and the drawing of each of these subgraphs contains at least one crossing of edges. It is easy to see, that each pair of crossing edges belongs to precisely~$C_{3}^1=3$ subgraphs~$K_{3,3}$.  Hence, $\cr(K_{3,5})\ge {10\over 3}$ and, consequently,~$\cr(K_{3,5})\ge 4$.

Now let us assume, that~$K_{3, 7}$ is an almost planar graph. Clearly, this graph is maximal (since it is a complete bipartite graph). Consider its regular drawing. Since~$e(K_{3,7})=21>3v(K_{3,7})-10$, by lemma~\ref{llr} each of three vertices of degree~7 is incident to at least~$\lceil {7\over 3} \rceil =3$ simple edges. Hence, our drawing has at least 9 simple edges and~$\cr(K_{3,7})\le {21-9\over 2}=6$.

However, there are at least~4 crossings in the drawing of each subgraph, isomorphic to~$K_{3,5}$. There are~$C_7^5=21$ such subgraphs, and each crossing belongs to~$C_5^3=10$ subgraphs of type~$K_{3,5}$. Whence it follows, that 
$$\cr(K_{3,7}) \ge  \cr(K_{3,5}) \cdot{21\over 10} = {84\over 10}> 6.$$ 
Obtained contradiction shows us, that~$K_{3,7}$ is not an almost planar graph.

Thus,  $K_{1,n}$, $K_{2,n}$, $K_{3,3}$, $K_{3,4}$, $K_{3,5}$, $K_{3,6}$ and~$K_{4,4}$ are all almost planar graphs  among  complete bipartite graphs.

\end{document}